\numberwithin{equation}{section}
\newtheorem{lem}[equation]{Lemma}
\newtheorem{prop}[equation]{Proposition}
\newtheorem{thm}[equation]{Theorem}
\theoremstyle{definition}
\newtheorem{defn}[equation]{Definition}
\newtheorem{rem}[equation]{Remark}
\def\IR{\mathbb R}
\def\eps{\varepsilon}
\newcommand{\id}{\operatorname{id}}
\newcommand{\supp}{\operatorname{supp}}
\newcommand{\vol}{\operatorname{vol}}
\newcommand{\Conf}{\operatorname{Conf}}
\newcommand{\capac}{\operatorname{Cap}}
\title{Regularity of conformal metrics with large first eigenvalue}
\author{Henrik Matthiesen}
\address
{Max Planck Institute for Mathematics,
Vivatsgasse 7, 53111 Bonn.}
\email{hematt\@@mpim-bonn.mpg.de}
\date{\today}
\keywords{Yamabe equation, Laplace eigenvalues, conformal spectrum, isospectral metrics}
\begin{document}

\maketitle

\begin{abstract}
We establish a regularity result for conformal metrics with unit volume, $L^p$ scalar curvature
bounds for $p>n/2$ and first eigenvalue of $\Delta$ bounded from below by a constant $B > \Lambda_1(S^n,[g_{st.}]).$
\end{abstract}

\section{Introduction}

Let $(M,g)$ be a closed Riemannian manifold of dimension $n \geq 3.$
For a fixed metric $g$ we consider conformally related metrics of the form $u^{4/(n-2)}g,$ where $u$ is a smooth positive function.
The volumes of a measurable set $\Omega \subseteq M$ with respect to $g$ and $u^{4/(n-2)}g$ are related by
\begin{equation*}
\vol(\Omega,u^{4/(n-2)}g) = \int_\Omega u^{2^\star} dV_g,
\end{equation*} 
where $2^\star= 2n/(n-2)$ is the critical Sobolev exponent for the embedding
$W^{1,2}\hookrightarrow L^p$ and $V_g$ denotes the volume measure of $g.$

The scalar curvature transforms according to the semilinear elliptic equation
\begin{equation} \label{Yamabe}
4\frac{n-1}{n-2} \Delta_g u + R_g u = R_{u^{4/(n-2)}g} u^{2^\star -1}
\end{equation}
which is of critical nonlinearity.
Here $R_g$ and $R_{u^{4/(n-2)}g}$ denote the scalar curvature of the metric $g$ respectively $u^{4/(n-2)}g,$
and $\Delta_g=\Delta$ the (positive) Laplace operator of $(M,g).$

Thus one may view the scalar curvature as a 'Laplacian of the metric' when one considers
only a fixed conformal class.
In view of this analogy one may ask whether $L^p$-bounds on the scalar curvature imply $W^{2,p}$-bounds
on the conformal factors $u.$
In general, this is not true, see e.g.\ \cite{CGW}.
For $p>n/2,$ several results in this direction are known under certain additional assumptions.
Classical examples of such assumptions are that the first eigenvalue of the Laplace operator is bounded away
from $0$ and that additionally $(M,g)$ has dimension three \cite{BPY,CY2} or $(M,g)=(S^n,g_{st.})$ \cite{CY1}.
The assumptions on the geometry are made in order to rule out a blow-up at a single point.
In the case of the sphere, the result only holds true up to pulling back the conformal factors
by conformal transformations.
The large group of conformal diffeomorphisms of $S^n$ makes it possible to avoid blow-ups.
In dimension three a possible blow-up can be analyzed carefully and eventually ruled out.

We prove a result in the spirit of the results in \cite{BPY, CY1, CY2,Gursky}, but instead of geometric assumptions we assume that the
first eigenvalue is sufficiently large in order to rule out a possible blow-up.

Denote by $\omega_n$ the $n$-dimensional Euclidean volume of the unit ball.
Our main result is

 \begin{thm} \label{main}
 Let $(M,g)$ be a closed Riemannian manifold of dimension $n\geq 3$ and $u$ a smooth positive function.
 Consider the conformal metric $\tilde{g}=u^{4/(n-2)}g$ and denote by $\tilde{R}$ its scalar curvature.
Assume that
 \begin{itemize}
 	\item[(i)] $\vol(M,\tilde{g})=1$,
 	\item[(ii)] $\int_M |\tilde{R}|^p u^{2^\star} dV_g \leq A$ for some $n/2 < p< \infty,$
 	\item[(iii)] $\lambda_1(M,\tilde{g})\geq B > n\left( (n+1)\omega_{n+1} \right)^{2/n}.$ 
 \end{itemize}
 Then there exist constants $C_1,C_2, C_3>0$ depending on $(M,g)$ and $A,B$ such that
 $C_1 \leq u\leq C_2$ and $\|u\|_{W^{2,p}(M,g)}\leq C_3.$
 \end{thm}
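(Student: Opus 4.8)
The plan is to argue by contradiction via a concentration-compactness dichotomy. Suppose the conclusion fails: then there is a sequence $u_i$ of smooth positive functions satisfying (i)--(iii) with the same constants $A,B$, but for which no uniform bounds $C_1 \leq u_i \leq C_2$ and $\|u_i\|_{W^{2,p}(M,g)} \leq C_3$ hold. The measures $d\mu_i = u_i^{2^\star}\,dV_g$ are probability measures on the compact manifold $M$, so after passing to a subsequence they converge weakly-$*$ to a probability measure $\mu$ on $M$. The heart of the argument is to show that the large-eigenvalue hypothesis (iii) forces $\mu$ to have no atoms, and in fact to be absolutely continuous with bounded density; once the density is under control, elliptic regularity applied to the Yamabe equation \eqref{Yamabe} upgrades the weak convergence to the desired $W^{2,p}$ and pointwise bounds, contradicting the failure of the conclusion.

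\textbf{Ruling out concentration:}

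First I would quantify how an atom in $\mu$ would depress the first eigenvalue. If $\mu(\{x_0\}) = \alpha > 0$ for some point $x_0$, then for large $i$ almost all of the $\tilde g_i$-volume sits in a tiny $g$-ball around $x_0$, and one can build a test function for the Rayleigh quotient of $\Delta_{\tilde g_i}$ that is supported essentially on that ball and roughly constant on the rest of $M$, exploiting that $M \setminus B$ carries $\tilde g_i$-volume close to $1-\alpha$ but can be made to have small $\tilde g_i$-"capacity" cost relative to a suitable cutoff. Running this carefully, the presence of an atom of mass $\alpha$ would give
\[
\lambda_1(M,\tilde g_i) \leq \frac{n\big((n+1)\omega_{n+1}\big)^{2/n}}{\alpha^{2/n}} + o(1),
\]
the constant $n((n+1)\omega_{n+1})^{2/n}$ being exactly $\Lambda_1(S^n,[g_{st.}])$, the conformal supremum of the first eigenvalue on the round sphere, realized in the limit by a conformal blow-up to a round-sphere bubble carrying all the mass $\alpha$. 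Since $\alpha \leq 1$, this bound is at most $\Lambda_1(S^n,[g_{st.}])$ when $\alpha = 1$ and larger otherwise; but hypothesis (iii) forces $B \leq \lambda_1(M,\tilde g_i)$, which is compatible only if $\alpha < 1$ — and a slightly sharper version of the capacity estimate, using that the remaining mass $1-\alpha$ is distributed so as not to help, rules out every $\alpha > 0$. Thus $\mu$ has no atoms. A Lions-type argument then promotes "no atoms" to genuine non-concentration: for every $\eps > 0$ there is $r > 0$ with $\mu_i(B_r(x)) \leq \eps$ for all $x$ and all large $i$.

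\textbf{From non-concentration to uniform bounds:}

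With uniform small-concentration in hand, I would run a Moser iteration on \eqref{Yamabe} written in the form $4\frac{n-1}{n-2}\Delta_g u_i = R_{\tilde g_i} u_i^{2^\star-1} - R_g u_i$. The critical term $R_{\tilde g_i} u_i^{2^\star-1}$ is controlled in $L^{p'}_{loc}$ with $p' > n/2$ using hypothesis (ii) together with the small-concentration bound, which lets one absorb the borderline Sobolev term with a small constant in each iteration ball — this is the standard $\eps$-regularity mechanism. That yields a uniform local $L^\infty$ bound on $u_i$, hence a uniform $W^{2,p}$ bound by $L^p$ elliptic estimates, hence (Sobolev) a uniform $C^{1,\alpha}$ bound. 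Convergence of $u_i$ in $C^1$ to a positive limit $u_\infty$ — positivity because the limit satisfies the same equation and a Harnack inequality, together with $\vol(M,u_\infty^{4/(n-2)}g) = 1$, forbids $u_\infty \equiv 0$ — then gives the lower bound $u_i \geq C_1 > 0$ and completes the contradiction.

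\textbf{Main obstacle:}

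The delicate point is the capacity/test-function computation that produces the sharp constant $\Lambda_1(S^n,[g_{st.}]) = n((n+1)\omega_{n+1})^{2/n}$ and shows a strict inequality is forced at every atom size $\alpha \in (0,1]$. One must blow up at the concentration point at the correct conformal scale, recognize the rescaled limit problem on $\IR^n$ (equivalently $S^n$ minus a point) and identify the optimal constant with the round-sphere eigenvalue via the stereographic projection, while simultaneously controlling the error from the $L^p$ scalar-curvature bound (which guarantees the bubble is, in the limit, exactly the round metric with no curvature defect) and from the "leftover" mass on the complement. Getting the inequality to be strict — rather than merely $\leq \Lambda_1$ — is what makes hypothesis (iii)'s strict inequality $B > \Lambda_1(S^n,[g_{st.}])$ exactly the right threshold, and it is here that the precise extremal analysis of the conformal spectrum on $S^n$ enters.
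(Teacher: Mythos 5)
Your overall strategy matches the paper's in outline -- both hinge on showing that hypothesis (iii) forbids concentration of the measures $u^{2^\star}\,dV_g$, then use Moser iteration and a Harnack inequality to get the $L^\infty$ and $W^{2,p}$ bounds -- but the mechanism you propose for ruling out an atom of mass $\alpha\in(0,1)$ is not correct. Your claimed estimate
\[
\lambda_1(M,\tilde g_i) \;\leq\; \frac{n\bigl((n+1)\omega_{n+1}\bigr)^{2/n}}{\alpha^{2/n}} + o(1)
\]
goes the wrong way as $\alpha$ decreases: it becomes \emph{weaker}, so it cannot contradict $B > n((n+1)\omega_{n+1})^{2/n}$ for any $\alpha<1$, and the appeal to "a slightly sharper version of the capacity estimate" does not repair this. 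The actual dichotomy is: if $\mu$ has an atom but is \emph{not} entirely a Dirac mass, one does not need the sphere constant at all -- a Lipschitz cutoff $\psi$ with small $n$-Dirichlet energy (using $\capac_n(B(x,r),B(x,R))\to 0$ as $r\to 0$ and conformal invariance of the $n$-energy) separates the atom from the rest of the mass and drives the Rayleigh quotient, hence $\lambda_1$, to $0$, which already violates $B>0$. Only in the genuinely sharp case $\mu=\delta_x$ does the sphere constant enter, and there the argument is not a rescaled blow-up to a round bubble but a Hersch-type balancing: after choosing a conformal immersion $\Phi$ of a flat neighborhood of $x$ into $S^n$, one picks conformal diffeomorphisms $s_k$ so that $\psi\cdot(z^i\circ s_k\circ\Phi)$ have vanishing $\tilde g_k$-means, uses the pointwise identity $\sum_i |d(z^i\circ\Phi)|^2_{\Phi^*g_{st}} = n$, and the conformal invariance of the $L^n$-energy to get $\limsup_k\lambda_1 \leq n((n+1)\omega_{n+1})^{2/n} + O(\eps^{1/n})$ for every $\eps>0$. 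Since $\eps$ is arbitrary this gives $\limsup_k\lambda_1 \leq n((n+1)\omega_{n+1})^{2/n}$ with a \emph{non-strict} inequality, which is enough because hypothesis (iii) already demands a strict gap $B>n((n+1)\omega_{n+1})^{2/n}$; your concern about needing to force a strict inequality at the extremal is therefore unnecessary.

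A second, smaller misconception in your write-up: the $L^p$ scalar-curvature bound (ii) plays no role whatsoever in the non-concentration step, so your remark that it "guarantees the bubble is, in the limit, exactly the round metric with no curvature defect" is not how the argument proceeds. The concentration analysis is a purely variational/conformal-invariance argument about test functions, independent of the Yamabe equation. Hypothesis (ii) enters only afterwards, in the $\eps$-regularity (Moser) step: non-concentration makes the critical term $\int \tilde R\,\eta^2 u^{2^\star+2\eps}$ absorbable precisely because the local mass $\int_{B(x,r)} u^{2^\star}$ can be made small uniformly, and then the full $L^\infty$ two-sided bound comes from Trudinger's Harnack inequality for $\Delta u = f u$ with $f\in L^q$, $q>n/2$ -- no bubble identification is needed. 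Finally, the paper does not prove the theorem by a global contradiction-sequence argument: the non-concentration proposition is quantitative (for every $\delta>0$ there is a uniform $r>0$ with $\int_{B(x,r)}u^{2^\star}<\delta$ for \emph{all} $u$ satisfying (i) and (iii)), and the remaining estimates are applied directly to a single $u$, which avoids the awkward step of extracting a positive limit $u_\infty$ from a hypothetical bad sequence.
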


Observe that 
once the $L^\infty$-bounds on $u$ and $u^{-1}$  are established,
the bound $\|u\|_{W^{2,p}(M,g)}\leq C_3$ is a consequence of the 
standard elliptic estimates in $L^p$-spaces applied to equation \eqref{Yamabe}, 
see e.g.\ \cite[Theorem 6.4.8]{Morrey_1966}

The geometric significance of the constant $B$ in assumption $(iii)$ is that we have $\lambda_1(S^n,g_{st.}) \vol(S^n,g_{st.})^{2/n}=n\left( (n+1)\omega_{n+1} \right)^{2/n},$
where $g_{st.}$ denotes the round metric on $S^n$ of curvature $1.$

\begin{rem}
Due to a result of Petrides \cite{Petrides}, any conformal class except for the standard conformal class on $S^n$ admits a smooth metric $\tilde g$ with unit volume and $\lambda_1(M,\tilde{g}) > n\left( (n+1)\omega_{n+1} \right)^{2/n}.$ 
See also \cite{CES}, where a related but weaker result is proved.
\end{rem}

\cref{main} has some immediate and interesting consequences.

As mentioned above, for $A$ sufficiently large we can find a constant $B>n\left( (n+1)\omega_{n+1} \right)^{2/n}$ such that there is at least one metric in the conformal class of $g$
 satisfying the assumptions of \cref{main} with these constants $A,B.$
For such $A,B$ it is possible to find a positive and H{\"o}lder continuous function $u,$
such that $u^{4/(n-2)}g$  maximizes $\lambda_1$ among all unit volume metrics in 
the conformal class of $g$
 satisfying the same $L^p$-bound on the scalar curvature, see \cref{conf_spec}

Another consequence is a compactness result for sets of isospectral metrics within a conformal class, which satisfy in addition the assumptions of \cref{main}, see \cref{isospec}

In \cref{proof_main} we explain the proof of \cref{main}.
Afterwards, in \cref{appl}, we briefly discuss the above mentioned applications.

\subsection*{Acknowledgment}
The author started working on this problem in his master's thesis at the University of Bonn and it is a great pleasure
to thank his advisor Werner Ballmann for suggesting this problem and many helpful discussions.
He is also very grateful to Bogdan Georgiev for many mathematical conversations.
The comments of the referee helped to improve the presentation significantly.
Moreover, the  support and hospitality of
the Max Planck Institute for Mathematics in Bonn are gratefully acknowledged.

\section{Proof of the theorem} \label{proof_main}

The main argument for the proof of \cref{main} is that assumption $(iii)$ rules
out a possible blow-up.
Once this is established, the result follows from arguments which are seen as fairly standard by now.
These arguments are based on the Moser iteration scheme.
For convenience, we explain how to lift the integrability in order to a find bound on $\|u\|_{2^\star+\eps}$ for some $\eps>0.$
This is proved in different ways in various places, but we could not locate a reference
stating precisely what we need.
Once this is done, the $L^\infty$-bounds on $u$ and $u^{-1}$ follow from a Harnack inequality established by Trudinger in \cite{Trudinger}.
Given the $L^\infty$-bounds the result follows from standard elliptic theory.

In general, all constants called $C$ may differ from line to line and will depend on $(M,g)$ and the data
$A,B.$

\subsection{A Volume non-concentration result}

We start with a few preparations.
From now on all metric quantities refer to the fixed background metric $g$
if not explicitly stated differently.
Recall the definition of $p$-capacities,
\begin{defn}
For a pair $(E,F)$ of  subsets $E \subset \subset \mathring{F} \subseteq M$ we define the \emph{$p$-capacity} by
\begin{equation*}
\capac_p(E,F):= \inf \int_M | d f |^p dV_g ,
\end{equation*}
where the infimum is taken over all Lipschitz functions $f \colon M \to \IR$ which are $1$ on $E$ and $0$ outside $F$.
\end{defn}
Note that since
\begin{equation} \label{invariance}
\int_M |d f |^n_{\tilde{g}} dV_{\tilde{g}}= \int_M |d f|^n_g dV_g,
\end{equation}
whenever $g$ and $\tilde{g}$ are conformally related, the $n$-capacity is conformally invariant.
We will use the following frequently.

\begin{lem} \label{n-capacity}
Let $R>0,$ then
for $p \leq n$ and any point $x \in M$, the $p$-capacity satisfies $\lim_{r \to 0} \capac_p(B(x,r),B(x,R))=0$.
\end{lem}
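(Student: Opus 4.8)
The plan is to establish the borderline case $p=n$ directly, by exhibiting an explicit family of logarithmic cutoffs, and to reduce the remaining cases $p<n$ to it via H\"older's inequality. For the latter reduction, note that if $f$ is a Lipschitz competitor for the pair $(B(x,r),B(x,R))$, then, since $p\le n$,
\begin{equation*}
\int_M|df|^p\,dV_g\le\Big(\int_M|df|^n\,dV_g\Big)^{p/n}\vol(B(x,R))^{1-p/n};
\end{equation*}
taking the infimum over all such $f$ and using that $t\mapsto t^{p/n}$ is increasing yields
\begin{equation*}
\capac_p(B(x,r),B(x,R))\le\capac_n(B(x,r),B(x,R))^{p/n}\,\vol(B(x,R))^{1-p/n},
\end{equation*}
so it suffices to treat $p=n$.

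For $p=n$ I would first reduce to small $R$. If $0<R'\le R$, then every competitor for $(B(x,r),B(x,R'))$, extended by zero, is a competitor for $(B(x,r),B(x,R))$, whence $\capac_n(B(x,r),B(x,R))\le\capac_n(B(x,r),B(x,R'))$; choosing $R'$ below the injectivity radius at $x$, I may assume that $B(x,R)$ lies in a normal coordinate ball on which $g$ is bi-Lipschitz equivalent to the Euclidean metric. With this normalization, for $0<r<R$ set $d=d_g(x,\cdot)$ and
\begin{equation*}
f_r(y)=\min\Big\{1,\ \max\Big\{0,\ \tfrac{\log(R/d(y))}{\log(R/r)}\Big\}\Big\},
\end{equation*}
which is Lipschitz, equals $1$ on $B(x,r)$, and vanishes outside $B(x,R)$. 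Since $|dd|_g=1$ almost everywhere, $|df_r|_g=(d\,\log(R/r))^{-1}$ on the annulus $B(x,R)\setminus B(x,r)$ and vanishes elsewhere, so by the coarea formula together with the area bound $\mathcal H^{n-1}(\{d=t\})\le Ct^{n-1}$ for $t\le R$ (a consequence of the metric comparison, or of Bishop--Gromov),
\begin{equation*}
\int_M|df_r|^n\,dV_g\le\frac{1}{(\log(R/r))^{n}}\int_r^R\frac{Ct^{n-1}}{t^{n}}\,dt=\frac{C}{(\log(R/r))^{n-1}}\longrightarrow0\quad(r\to0).
\end{equation*}
This bounds $\capac_n(B(x,r),B(x,R))$, and with the first step the lemma follows. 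Equivalently, one may transport $f_r$ to the coordinate chart and quote the explicit value $n\omega_n(\log(\sigma/\rho))^{1-n}$ of the Euclidean $n$-capacity of concentric balls $B_\rho\subset B_\sigma$.

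I do not expect a genuine obstacle here. The only delicate point is the decay $\mathcal H^{n-1}(\{d=t\})=O(t^{n-1})$ of the geodesic sphere areas as $t\to0$: this is exactly what forces the passage to small $R$, since without it the energy integral above would diverge in the borderline case $p=n$. For $p<n$ one could alternatively bypass the borderline case and use a power cutoff built from $d^{-(n-p)/(p-1)}$, whose $p$-energy is $O(r^{n-p})$, but the two-step argument above is shorter and uniform in $p$.
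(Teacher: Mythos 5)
Your proof is correct and takes essentially the same approach as the paper: reduce $p<n$ to $p=n$ via H\"older, reduce to small $R$, and use the logarithmic cutoff $\log(R/d)/\log(R/r)$ whose $n$-energy is $O((\log(R/r))^{1-n})$. The only cosmetic difference is that you compute the energy via the coarea formula and the bound $\mathcal H^{n-1}(\{d=t\})\le Ct^{n-1}$, whereas the paper compares $g$ directly to the Euclidean metric on a small ball and quotes the flat computation; these are the same estimate in slightly different clothing.
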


\begin{proof}
Observe that H{\"o}lder's inequality implies that
it suffices to consider the case $p=n.$
Moreover, it clearly suffices to prove the statement only for some small $R < inj(M).$ 
Let $0<r<R$ and define $\psi_{r,R}$ by
\begin{equation*}
\psi_{r,R}(z)= \begin{cases}\frac{\log(d_g(x,z)R^{-1})}{\log(rR^{-1})} & \text{ if } r \leq d_g(x,z) \leq R \\
							1 & \text{ if }  d_g(x,z) \leq r \\
							0 & \text{ if }  d_g(x,z) \geq R.
	\end{cases}
\end{equation*}
If $g$ is flat in $B(x,R),$ we have
\begin{equation*}
\int_{B(x,R)} | \nabla \psi_{r,R} |^n dV_g =\omega_n \left( \log \left( \frac{R}{r} \right) \right)^{1-n}.
\end{equation*}
In general, for $R>0$ such that $g$ is comparable on $B(x,R)$ to the Euclidean metric on $B(0,R),$ we have
\begin{equation*}
\int_{B(x,R)} | \nabla \psi_{r,R} |^n dV_g \leq C \left( \log \left( \frac{R}{r} \right) \right)^{1-n}.
\end{equation*}
We conclude
\[
\lim_{r \to 0} \int_{B(x,R)} | \nabla \psi_{r,R} |^n =0,
\]
thus $\lim_{r \to 0} \capac_n(B(x,r),B(x,R))=0.$ 
\end{proof}

Before we can prove the volume non-concentration result, we need the following observation
about conformal immersions which also appears in \cite{ElSoufi_Ilias}.

\begin{lem} \label{conformal}
Let $\Phi \colon (M,g) \to (S^n,g_{st.})$ be a conformal immersion.
Denote by $z^1, \dots, z^{n+1}$ the standard coordinate funtions of $\IR^{n+1}$ restricted to $S^n.$
Then 
\begin{equation} \label{conf_form}
\Phi^* g_{st.} = \frac{1}{n} \sum_{i=1}^{n+1} |\nabla(z^i \circ \Phi) |^2 g.
\end{equation}

\begin{proof}
First, observe that for $\gamma \in SO(n+1)$ we have
\begin{equation*} 
(\gamma \circ \Phi)^*g_{st.} =  \Phi^* g_{st.} 
\end{equation*}
and
\begin{equation*}
\frac{1}{n} \sum_{i=1}^{n+1} |\nabla(z^i \circ \Phi) |^2 g = \frac{1}{n} \sum_{i=1}^{n+1} |\nabla(z^i \circ \gamma \circ \Phi) |^2 g.
\end{equation*}
Thus it suffices to compute both sides of \eqref{conf_form} at a point $x \in M$ with $\Phi(x)=N,$ where $N=(0, \dots,0,1) \in S^n$ denotes the northpole.
In this case we may use the functions $z^1, \dots , z^n$ as coordinates about $N.$
We thus have coordinates $z^i \circ \Phi,$ $i=1, \dots, n$ about $x,$ such that $D \Phi(x)=\id$ in these coordinates.
Moreover, since $\Phi$ is conformal, there is positive constant $a$ such that $g_{jk}(x)=a \delta_{jk}.$
Thus we find 
\begin{equation*}
\frac{1}{n} \sum_{i=1}^{n+1} |\nabla(z^i \circ \Phi) |^2(x) g_{jk}(x)=\frac{a}{n} \sum_{i=1}^{n+1} \frac{1}{a} |\nabla z^i|^2(N)\delta_{jk}= \delta_{jk}=\left(\Phi^* g_{st.}\right)_{jk}(x).\\
\qedhere
\end{equation*}
\end{proof}

\end{lem}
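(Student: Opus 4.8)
The plan is to deduce \eqref{conf_form} from two elementary facts: that the round metric $g_{st.}$ is the metric induced on $S^n$ by the Euclidean inclusion $S^n\hookrightarrow\IR^{n+1}$, and that a conformal immersion between manifolds of equal dimension pulls the target metric back to a pointwise positive multiple of the source metric. Since $\dim M=n$, the latter gives a smooth positive function $\mu$ on $M$ with $\Phi^*g_{st.}=\mu\,g$, so it suffices to identify $\mu=\tfrac1n\sum_{i=1}^{n+1}|\nabla(z^i\circ\Phi)|^2$.

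For this, write $\iota\colon S^n\hookrightarrow\IR^{n+1}$ for the inclusion and $Z^i$ for the ambient coordinates, so that $g_{st.}=\iota^*\bigl(\sum_{i=1}^{n+1}dZ^i\otimes dZ^i\bigr)$ and $z^i=Z^i\circ\iota$. Pulling back by $\Phi$ then yields the identity $\Phi^*g_{st.}=\sum_{i=1}^{n+1}d(z^i\circ\Phi)\otimes d(z^i\circ\Phi)$ of symmetric $2$-tensors on $M$. Taking the $g$-trace of both sides, and using $\Phi^*g_{st.}=\mu g$ on the left, gives $n\mu=\sum_{i=1}^{n+1}|\nabla(z^i\circ\Phi)|^2$, which is precisely \eqref{conf_form}.

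I do not anticipate a genuine obstacle: the statement merely says that the conformal factor of a conformal map into the round sphere equals the averaged squared gradient of the embedding coordinates, and the only point needing care is the constant $1/n$, which is forced by the trace in dimension $n$. One can also argue pointwise, as in \cite{ElSoufi_Ilias}: both sides of \eqref{conf_form} are invariant under post-composing $\Phi$ with an element of $SO(n+1)$, so one may assume $\Phi(x)=N$ is the north pole, use $z^1,\dots,z^n$ as coordinates near $N$ and their $\Phi$-pullbacks as coordinates near $x$ so that $D\Phi(x)=\id$, and observe that $z^{n+1}\circ\Phi$ has a critical point at $x$. The trace argument seems preferable, as it avoids singling out a point or choosing adapted coordinates.
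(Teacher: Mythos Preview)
Your trace argument is correct and complete. It differs from the paper's proof, which is precisely the alternative you sketch in your final paragraph: the paper uses $SO(n+1)$-invariance to reduce to a point mapping to the north pole, pulls back $z^1,\dots,z^n$ as local coordinates so that $D\Phi(x)=\id$, observes $g_{jk}(x)=a\delta_{jk}$ by conformality, and then verifies \eqref{conf_form} by direct substitution. Your approach is cleaner: writing $\Phi^*g_{st.}=\sum_i d(z^i\circ\Phi)\otimes d(z^i\circ\Phi)$ and taking the $g$-trace extracts the conformal factor in one line, with the constant $1/n$ arising transparently from $\tr_g g=n$. The paper's argument, by contrast, makes the role of $z^{n+1}$ (critical at $N$) and the explicit orthonormality visible, which some readers may find more concrete. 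Both are short; yours avoids choosing a point or coordinates and makes the identity manifestly an equality of tensors rather than of components.
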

The next proposition states that we can control the volume of small balls uniformly in terms
of the lower bound $B$ on the first eigenvalue.

\begin{prop} \label{non_conc_prop}
Let $u$ be a function satisfying assumptions $(i)$ and $(iii)$ in \cref{main}. 
For any $\delta>0,$ there is a radius $r=r(M,g,\delta,B)>0,$ such that $\int_{B(x,r)} u^{2^\star}dV_g<\delta$ for any $x \in M.$ 
\end{prop}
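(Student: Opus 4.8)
The plan is to argue by contradiction and weak compactness of measures. If the conclusion fails, there are $\delta>0$, smooth positive functions $u_j$ satisfying $(i)$ and $(iii)$, points $x_j\in M$ and radii $r_j\to 0$ with $\int_{B(x_j,r_j)}u_j^{2^\star}\,dV_g\ge\delta$. Passing to a subsequence we may assume $x_j\to x_\infty$ and that the probability measures $d\mu_j=u_j^{2^\star}\,dV_g$ converge weakly as Radon measures to a probability measure $\mu$ on $M$; since $r_j\to0$, the measure $\mu$ carries an atom at $x_\infty$ of mass $\ge\delta$. It then suffices to show $\limsup_j\lambda_1(M,\tilde g_j)\le n\bigl((n+1)\omega_{n+1}\bigr)^{2/n}$, which contradicts $(iii)$. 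The two estimates I would use throughout are: for $f$ with $\int_M f\,d\mu_j=0$,
\[
\lambda_1(M,\tilde g_j)\,\int_M f^2\,d\mu_j\ \le\ \int_M u_j^2|\nabla f|^2_g\,dV_g\ \le\ \Bigl(\int_M|\nabla f|^n_g\,dV_g\Bigr)^{2/n},
\]
the first being the variational characterisation of $\lambda_1$ together with the identity $|\nabla f|^2_{\tilde g_j}\,dV_{\tilde g_j}=u_j^2|\nabla f|^2_g\,dV_g$, and the second being H\"older's inequality with $\vol(M,\tilde g_j)=\int_M u_j^{2^\star}\,dV_g=1$; the point of the right-hand side is that $\int_M|d\Phi|^n_g\,dV_g$ is conformally invariant (as in \eqref{invariance}) and scale invariant, so it can be computed in the Euclidean model after a blow-up. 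Applied coordinatewise to a map $\Phi=(\Phi^1,\dots,\Phi^{n+1})\colon M\to S^n\subset\IR^{n+1}$ with $\int_M\Phi^i\,d\mu_j=0$ for every $i$, and summed over $i$ using $\sum_i(\Phi^i)^2\equiv1$, this gives $\lambda_1(M,\tilde g_j)\le\bigl(\int_M|d\Phi|^n_g\,dV_g\bigr)^{2/n}$.

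The heart of the matter is to produce, for large $j$, a map $\Phi_j\colon M\to S^n$ which is balanced for $\mu_j$ in the above sense and has $\int_M|d\Phi_j|^n_g\,dV_g\le n^{n/2}\vol(S^n,g_{st.})+o(1)=n^{n/2}(n+1)\omega_{n+1}+o(1)$. I would build $\Phi_j$ from the inverse stereographic projection: in geodesic normal coordinates about $x_\infty$ the metric $g$ differs from the Euclidean one by $O(\rho^2)$ on $B(x_\infty,\rho)$, so a rescaled inverse stereographic projection maps $B(x_\infty,\rho_j)$ conformally for the Euclidean model (hence almost conformally for $g$, with error $O(\rho_j^2)$) onto a ball of $S^n$. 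On this ball \cref{conformal}, applied in the Euclidean model, identifies $n^{-n/2}|d\Phi_j|^n_{g_0}\,dV_{g_0}$ with the pulled-back volume form $dV_{\Phi_j^*g_{st.}}$, so the $n$-energy of $\Phi_j$ over $B(x_\infty,\rho_j)$ is $n^{n/2}$ times the $g_{st.}$-area of the image, which is at most $n^{n/2}\vol(S^n,g_{st.})$; replacing $g_0$ by $g$ costs only a factor $1+O(\rho_j^2)$. Outside $B(x_\infty,\rho_j)$ I would cut $\Phi_j$ off to a single point of $S^n$, interpolating across an annulus with the help of the logarithmic function $\psi_{\rho_j,R}$ of \cref{n-capacity}: since $\int_M|\nabla\psi_{\rho_j,R}|^n_g\,dV_g\le C(\log(R/\rho_j))^{1-n}\to0$ and the map only has to travel a controlled distance in $S^n$ across the transition, this region contributes $o(1)$ to the $n$-energy. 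Finally I would compose with a M\"obius transformation of $S^n$ supplied by Hersch's balancing lemma (valid in all dimensions) so that all components of $\Phi_j$ have zero $\mu_j$-mean; on the conformal piece this composition keeps the $n$-energy $\le n^{n/2}\vol(S^n,g_{st.})$, since the image stays an open subset of $S^n$. Together with the first paragraph this gives $\limsup_j\lambda_1(M,\tilde g_j)\le n\bigl((n+1)\omega_{n+1}\bigr)^{2/n}$ and the contradiction.

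The main obstacle, and the step requiring the most care, is the choice of the blow-up scale $\rho_j$ and the verification that the Hersch transformation does not spoil the estimate over the transition annulus: if $\mu_j$ has almost all of its mass at scales much finer than $\rho_j$, the pushed-forward measure $(\Phi_j)_*\mu_j$ concentrates near one point of $S^n$, forcing the balancing M\"obius transformation to be strongly expanding there and possibly amplifying the transition region's contribution. I expect this to be handled by a concentration--compactness argument: one selects $\rho_j\to0$ at a scale where the rescaled measures are non-degenerate (no further concentration at scale one), which makes the balancing transformation uniformly controlled; and in the complementary situation, where no such scale exists and the metrics $\tilde g_j$ develop an arbitrarily long thin neck, a single logarithmic-cutoff test function straddling the neck --- again via \cref{n-capacity} --- forces $\lambda_1(M,\tilde g_j)\to 0$, contradicting $(iii)$ outright. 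Carrying out this dichotomy cleanly, while keeping track of the $O(\rho_j^2)$ errors from the non-flatness of $g$ and of the M\"obius factors, is where the real work lies.
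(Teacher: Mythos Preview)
Your overall architecture matches the paper's: contradiction, weak$^*$ limit $\mu$ with an atom at $x_\infty$, balanced $S^n$-valued test functions, and the conformal invariance of the $n$-energy. The divergence is precisely at the difficulty you flag, and the paper resolves it more simply than the concentration--compactness dichotomy you sketch. First, the paper splits explicitly into $\mu\ne\delta_{x_\infty}$ (where a single logarithmic cutoff $\psi$, used as a scalar test function after subtracting its mean, already gives $\lambda_1(\tilde g_j)\to 0$; no balancing and no sphere map needed) and $\mu=\delta_{x_\infty}$. In the latter case, rather than building a global map $\Phi_j\colon M\to S^n$ with a transition annulus and then composing with a M\"obius map, the paper keeps the scalar cutoff $\psi$ \emph{separate} from the conformal embedding: after arranging $g$ to be flat near $x_\infty$ by a controlled quasi-isometric perturbation, it fixes a conformal embedding $\Phi\colon(\Omega,g)\to(S^n,g_{st.})$, applies the Hersch lemma to the measure $\psi\,d\mu_j$ to obtain $s_j\in\Conf(S^n)$ with $\int_\Omega\psi\,(z^i\circ s_j\circ\Phi)\,d\mu_j=0$, and uses the test functions $\psi\cdot(z^i\circ s_j\circ\Phi)$. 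Expanding the gradient squared, the main term $\sum_i\int\psi^2\,|d(z^i\circ s_j\circ\Phi)|^2_{\tilde g_j}\,dV_{\tilde g_j}$ is at most $n((n+1)\omega_{n+1})^{2/n}$ by H\"older and \cref{conformal}, \emph{independently of $s_j$}, since $s_j\circ\Phi$ is still a conformal map into $S^n$; the terms involving $\nabla\psi$ are $O(\eps^{1/n})$ by the small $n$-capacity, again independently of $s_j$. Because $\mu=\delta_{x_\infty}$ and $\psi(x_\infty)=1$, the left side $\lambda_1(\tilde g_j)\int\psi^2\,d\mu_j$ tends to $\limsup\lambda_1$. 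Thus the M\"obius map only ever touches the conformal piece, never the cutoff, so there is no amplification issue and no need to select a blow-up scale $\rho_j$; the case split is what makes this work, since the $\psi^2$-mass of $\mu_j$ must tend to $1$.

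A minor point: to reach $\lambda_1\le\bigl(\int|d\Phi|^n\bigr)^{2/n}$ you should sum the coordinate inequalities \emph{before} applying H\"older, i.e.\ $\sum_i\int u_j^2|\nabla\Phi^i|^2=\int u_j^2|d\Phi|^2\le\bigl(\int|d\Phi|^n\bigr)^{2/n}$, rather than applying H\"older coordinatewise and then summing.
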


\begin{proof}
The idea of the proof is based on arguments due to Kokarev, who proved the same result in dimension two in
\cite{Kokarev}.
Kokarev used ideas developed by Nadirashvili in \cite{Nadirashvili_1996}.

Assume the statement is not correct.
Then we can find $\delta>0$ together with a sequence $x_k \in M$ of points and a sequence $u_k$ of smooth positive functions such that $\vol(M,u_k^{4/(n-2)}g)=1,$ $\lambda_1(M,u_k^{4/(n-2)}g)\geq B,$ and
$\int_{B(x_k,1/k)}u_k^{2^\star}dV_g \geq \delta.$
We denote by $g_k$ the metric $u_k^{4/(n-2)}g.$

Up to extracting a subsequence we can assume that the probability measures $V_{g_k}$ converge to a Radon probability measure $\mu$ in the weak*-topology.
Moreover, we may also assume that $x_k \to x.$
We claim that $\mu(\{x\})>0.$
In fact, 
take a sequence $\eta_l \in C_c^\infty (B(x,2/l))$ with $0 \leq \eta_l \leq 1$ and $\eta_l(x)=1$.
Then, by dominated convergence,
\begin{equation*}
\mu(\{ x\})= \lim_{l \to \infty} \int_M \eta_l d \mu.
\end{equation*}
So fix $\eta_l$ as above and assume in addition that $\eta_l =1$ on $B(x, 1/l).$
We thus have
\begin{equation*}
\begin{split}
\mu(\{x\}) &
= \lim_{l \to \infty} \lim_{k \to \infty} \int_{B(x,2/l)} \eta_l u_k^{2^\star} dV_g  
 \geq \lim_{l \to \infty} \lim_{k \to \infty} \int_{B(x,1/l)} u_k^{2^\star}dV_g \\
& \geq \lim_{l \to \infty} \lim_{k \to \infty} \int_{B(x_k,1/k)} u_k^{2^\star}dV_g 
 \geq \lim_{k \to \infty} \int_{B(x_k,1/k)} u_k^{2^\star}dV_g 
\geq \delta.
\end{split}
\end{equation*}

We consider two cases: The first case is $\mu=\delta_x$ for some $x \in M$.
In the remaining case we can find for a point $x \in M$ with $\mu(\{x\})>0$ a radius $R>0$ such that $\mu(M \setminus B(x,2R)) > 0.$
Let us start with the second case which is the easier one.

Take a ball $B(x,2R)$ as described above.
By \cref{n-capacity} we find $r>0$ such that $\capac_n(B(x,r),B(x,R))< \eps.$ 
Thus we can choose a Lipschitz function $\psi$ supported in $B(x,R),$ which satisfies $0 \leq \psi \leq 1,$ $\psi=1$ on $B(x,r)$ and
$\int_M | \nabla \psi |^n dV_g < \eps.$

Denote by $\alpha_k$ the mean (with respect to $g_k$) of $\psi$, i.e.
\begin{equation*}
\alpha_k= \int_M \psi dV_{g_k}.
\end{equation*} 
By the min-max principle, H{\"o}lder's inequality and the conformal invariance \eqref{invariance}, we find
\begin{equation*} 
\begin{split}
\lambda_1(g_k) \int_M(\psi-\alpha_k)^2 dV_{g_k} \leq & \int_M |d \psi |_{g_k}^2 dV_{g_k}  \\
  \leq & \left( \int_M |d \psi |_{g_k}^n dV_{g_k} \right)^{2/n} \left(\vol_{g_k} \left( \supp \psi \right)\right)^{(n-2)/n} \\
 =& \left( \int_M |\nabla \psi |^n dV_{g} \right)^{2/n} \left(\vol_{g_k} \left( \supp \psi \right)\right)^{(n-2)/n} \\
\leq & \eps^{2/n}.
\end{split}
\end{equation*}
We can estimate the left-hand-side from below by $\lambda_1(g_k)$ times
\begin{equation*}
\alpha_{k}^2\int_{M \setminus B(x,R)} u_k^{2^\star} dV_g + (1-\alpha_{k})^2 \int_{B(x,r)}  u_k^{2^\star} dV_g.
\end{equation*}
Let us investigate both terms as $k \to \infty$.
We have
\begin{equation*}
\liminf_{k \to \infty} \int_{M \setminus B(x,R)} u_k^{2^\star} dV_g \geq \mu(M \setminus B(x,2R)),
\end{equation*}
and
\begin{equation*}
\liminf_{k \to \infty} \int_{B(x,r)} u_k^{2^\star} dV_g \geq  \mu(\{x\}).
\end{equation*}
Up to extracting a subsequence we may assume 
$\alpha_{k} \to \alpha$ as $k \to \infty$.
By construction $\alpha \in [0,1]$, thus we find
\begin{equation*}
\begin{split}
\limsup_{k \to \infty} \lambda_1(g_k) & \leq \frac{\eps^{2/n}}{\alpha^2\mu(M \setminus B(x,2R))+(1-\alpha)^2\mu(\{x\})}\\
& \leq \frac{4 \eps^{2/n}}{\min\{\mu(M \setminus B(x,2R)),\mu(\{x\}) \}}.
\end{split}
\end{equation*}
And thus $\limsup_{k \to \infty} \lambda_1(g_k)=0$, a contradiction.

The case $\mu = \delta_x$ is slightly more involved.
In a first step we observe that we may assume without loss of generality that $g$ is flat near $x.$
This observation is motivated by the arguments in \cite{CES}.

Given any $\eps>0$ we can replace $g$ by a another metric $g'$ which is flat near
$x$ and $(1+\eps)$-quasiisometric to $g,$  
i.e.\ $(1+\eps)^{-2} g(v,v) \leq g'(v,v) \leq (1+\eps)^2 g(v,v)$
for all non-zero tangent vectors $v,$
see e.g.\ \cite[Lemma 2.3]{CES}.
Then for each $k$ the metric $u_k^{4/(n-2)} g'$ is $(1+\eps)$-quasiisometric to the metric $u_k^{4/(n-2)} g.$
Rescale the functions $u_k$ to obtain new functions $u'_k$ such that we have $\vol(M,\left(u'_k\right)^{4/(n-2)} g')=1.$
Since the volumes of $(M,u_k^{4/(n-2)}g)$ and $(M, u_k^{4/(n-2)} g')$ are controlled by
\begin{equation} \label{volume}
(1+\eps)^{-n}  \leq \frac{\vol(M,u_k^{4/(n-2)}g')}{\vol(M,u_k^{4/(n-2)}g)} \leq (1+\eps)^n ,
\end{equation}
we find that 
the ratios $u_k^{4/(n-2)}/(u'_k)^{4/(n-2)}$ are uniformly bounded from above and below by 
$(1 + \eps)^2$ respectively $(1 + \eps)^{-2}.$
This implies that $u_k^{4/(n-2)} g$ is $(1+\eps)^2$-quasiisometric to $(u')_k^{4/(n-2)} g',$
thus
\begin{equation*}
\lambda_1\left(\left(u'_k\right)^{4/(n-2)} g'\right) \geq
(1+\eps)^{-4(n+1)} \lambda_1\left(u_k^{4/(n-2)} g\right).  
\end{equation*}
In particular, for $\eps$ sufficiently small we find $B'>n\left( (n+1)\omega_{n+1} \right)^{2/n},$ such that
$\lambda_1(M,\left(u'_k\right)^{4/(n-2)} g')\geq B'.$

Similarly as in \eqref{volume}, we have for any measurable subset $\Omega \subseteq M$ that
\begin{equation*} \label{volume2}
\vol(\Omega,(u'_k)^{4/(n-2)}g') \leq (1+\eps)^{2n} \vol(\Omega,u_k^{4/(n-2)}g),
\end{equation*}
since  $u_k^{4/(n-2)} g$ is $(1+\eps)^2$-quasiisometric to $(u'_k)^{4/(n-2)} g'.$ 
Applying this to subsets of $M \setminus \{x\}$ easily implies that $(u'_k)^{2^\star} V_{g'} \rightharpoonup^* \delta_x. $ 
In more detail, if $\nu$ is the weak*-limit of a subsequence of $(u'_k)^{2^\star} V_{g'},$
we have for any open $\Omega \subset \subset M \setminus \{x\}$ that
\begin{equation*}
\begin{split}
\nu(\Omega) 
& \leq \liminf_{k \to \infty} \vol (\Omega , (u'_k)^{4/(n-2)}g' ) 
\\
& \leq \liminf_{k \to \infty} (1+\eps)^{2n} \vol (\Omega , (u_k)^{4/(n-2)}g) 
\\
& \leq \limsup_{k \to \infty} (1+\eps)^{2n} \vol (\overline{\Omega}, (u_k)^{4/(n-2)}g)
\\
&\leq (1+\eps)^{2n} \mu (\overline{\Omega})
\\
&=0,
\end{split}
\end{equation*}
since $u_k^{2^\star}V_g \rightharpoonup^* \mu= \delta_x.$
This implies $\nu(M \setminus \{x\})=0$ and thus $\nu=\delta_x.$
We have shown that the limit of any weakly*-convergent subsequence of $(u'_k)^{2^\star} V_{g'}$ has to be $\delta_x.$
Since every subsequence of $(u'_k)^{2^\star} V_{g'}$ has a weakly*-convergent subsequence, it follows that $(u'_k)^{2^\star} V_{g'} \rightharpoonup^*  \delta_x.$  
Now it suffices to show that such a sequence $(u_k')$ can not exist on $(M,g').$
  
Let $\Omega$ be a conformally flat neighborhood of $x.$
Choose a conformal immersion $\Phi \colon (\Omega , g) \to (S^n,g_{st.})$.
By diminishing $\Omega$ if necessary we may assume that $\Phi$ is an embedding. 
Fix $\eps >0$ and choose a function $\psi \in W^{1,\infty}_0(\Omega)$  with $\int_\Omega | \nabla \psi |^n dV < \eps$, $0 \leq \psi \leq 1$, and $\psi(x)=1$, which is possible thanks
to \cref{n-capacity}.
By Lemma \ref{Hersch} below, we find $s_k \in \Conf(S^n)$ such that
\begin{equation*}
\int_\Omega \psi \cdot (z^i \circ s_k \circ \Phi) dV_{g_k}=0
\end{equation*}
for all $i=1, \dots , n$.
Using the functions $\psi \cdot (z^i \circ s_k \circ \Phi)$ as test functions and summing over all $i$ yields
\begin{equation}  \label{sum1}
\begin{split}
\lambda_1(g_k) \int_\Omega \psi^2 dV_{g_k} 
=&  \lambda_1(g_k) \sum_{i=1}^{n+1} \int_\Omega (\psi z^i)^2 dV_{g_k} \\
 \leq& \sum_{i=1}^{n+1} \int_\Omega |d (\psi \cdot (z^i \circ s_k \circ \Phi))|^2_{g_k} dV_{g_k} \\
 = &\sum_{i=1}^{n+1} \int_\Omega |d \psi|_{g_k}^2 (z^i \circ s_k \circ \Phi)^2 dV_{g_k} \\
& + 2 \sum_{i=1}^{n+1} \int_\Omega (z^i \circ s_k \circ \Phi) \psi \langle d \psi, d(z^i \circ s_k \circ \Phi) \rangle_{g_k} dV_{g_k}  \\
& + \sum_{i=1}^{n+1} \int_\Omega \psi^2 |d(z^i \circ s_k \circ \Phi)|^2_{g_k} dV_{g_k}.
\end{split}
\end{equation}
By H\"older's inequality and conformal invariance, the first summand in \eqref{sum1} can be controlled as follows
\begin{equation} \label{summand1}
\begin{split} 
\sum_{i=1}^{n+1} \int_\Omega |d \psi|_{g_k}^2 (z^i \circ s_k \circ \Phi)^2 dV_{g_k}
= & \int_\Omega |d \psi|_{g_k}^2 dV_{g_k}  \\
\leq & \left( \int_\Omega |d \psi|_{g_k}^n dV_{g_k} \right)^{2/n} \vol_{g_k}(\Omega)^{(n-2)/n}  \\
\leq & \left( \int_\Omega |\nabla \psi|dV_{g} \right)^{2/n} \leq \eps^{2/n}.
\end{split}
\end{equation}
For the second summand in \eqref{sum1} notice that
\begin{equation} \label{summand2_1}
\begin{split}
\int_\Omega | d (z^i\circ s_k \circ \Phi) |_{g_k}^n dV_{g_k} 
&=
\int_{s_k \circ \Phi(\Omega)} | \nabla z^i |^n dV_{g_{st.}} \\
& \leq  C \vol(s_k \circ \Phi(\Omega)) 
\leq C,
\end{split}
\end{equation}
for a constant $C=C(n),$ thanks to conformal invariance.
This implies
\begin{equation} \label{summand2_2}
\begin{split}
\sum_{i=1}^{n+1}& \int_\Omega (z^i \circ s_k \circ \Phi) \psi \langle d \psi, d(z^i \circ s_k \circ \Phi) \rangle_{g_k} dV_{g_k} \\
\leq & 
 \sum_{i=1}^{n+1} \sup_{x \in \Omega}|(z^i \circ s_k \circ \Phi)| \int_\Omega |d \psi|_{g_k} |d (z^i \circ s_k \circ \Phi)|_{g_k} dV_{g_k}   \\
 \leq &
 \sum_{i=1}^{n+1}  \left( \int_\Omega |d \psi|^n_{g_k}  dV_{g_k}\right)^{1/n} \left( \int_\Omega |d (z^i \circ s_k \circ \Phi) |^n_{g_k} dV_{g_k} \right)^{1/n} \vol_{g_k}(\Omega)^{(n-2)/n} \\
 \leq &
 C \eps^{1/n}.
\end{split}
\end{equation}
The last summand in \eqref{sum1} is estimated using \cref{conformal},
\begin{equation} \label{summand3}
	\begin{split}
			\sum_{i=1}^{n+1} \int_\Omega \psi^2 |d(z^i \circ s_k \circ \Phi)|^2_{g_k} dV_{g_k}
		\leq & 
			\left( \int_\Omega \left( \sum_{i=1}^{n+1} |d(z^i \circ s_k \circ \Phi)|^2_{g_k} \right)^{n/2} dV_{g_k} \right)^{2/n} \\
		= & 
			n \vol ((s_k \circ \Phi)(\Omega))^{2/n} 
		\leq 
			n((n+1)\omega_{n+1})^{2/n}.
	\end{split}
\end{equation}
Combining \eqref{sum1}, \eqref{summand1}, \eqref{summand2_2} and \eqref{summand3}, we conclude
\begin{equation*}
\begin{split}
\limsup_{k \to \infty} \lambda_1(u_k^{4/(n-2)} g) 
= &
\limsup_{k \to \infty} \lambda_1(u_k^{4/(n-2)} g) \int_\Omega \psi dV_{g_k} \\
\leq & 
 \eps^{2/n} + C \eps^{1/n}  + n \left( (n+1) \omega_{n+1}\right)^{2/n},
 \end{split}
\end{equation*}
which proves our claim.
\end{proof}

Next, we give the version of the Hersch lemma, which we have used in the proof of \cref{non_conc_prop}.
\begin{lem}[Hersch lemma] \label{Hersch}
Let $\mu$ be a continuous Radon measure on $M$, $\psi \in W_0^{1,\infty}(\Omega),$
where $\Omega \subseteq M$.
Moreover, assume $0 \leq \psi \leq 1.$
Then for any fixed conformal map $\Phi \colon (\Omega,g) \to (S^n,g_{st.})$ there exists a conformal diffeomorphsim
$s \in \Conf(S^n)$ such that
\begin{equation*}
\int_\Omega \psi \cdot (z^i \circ s \circ \Phi) d\mu =0,
\end{equation*}
for all $i=1,\dots,n+1.$
\end{lem}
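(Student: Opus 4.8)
The plan is to use Hersch's center-of-mass argument. We may assume $c:=\int_\Omega \psi\,d\mu>0$, since otherwise $\psi=0$ $\mu$-almost everywhere, so every integral $\int_\Omega \psi\cdot(z^i\circ s\circ\Phi)\,d\mu$ vanishes and the conclusion holds with $s=\id$. Recall that, modulo composition with $SO(n+1)$, the group $\Conf(S^n)$ is parametrized by the open unit ball $B^{n+1}\subset\IR^{n+1}$: to each $a\in B^{n+1}$ one associates the conformal dilation $\vf_a\in\Conf(S^n)$ determined by $a$ (with $\vf_0=\id$ and $a\mapsto\vf_a$ continuous), and as $a$ tends to a boundary point $q\in S^n=\partial B^{n+1}$ the maps $\vf_a$ converge to the constant map $q$, locally uniformly on $S^n\setminus\{-q\}$. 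Define
\[
F\colon B^{n+1}\to\IR^{n+1},\qquad F(a)=\frac1c\int_\Omega \psi\cdot(\vf_a\circ\Phi)\,d\mu ,
\]
where $\vf_a\circ\Phi$ is viewed as an $\IR^{n+1}$-valued map through the inclusion $S^n\subset\IR^{n+1}$. Producing $s$ as in the statement is then the same as producing $a\in B^{n+1}$ with $F(a)=0$, and setting $s=\vf_a$.

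First I would verify that $F$ takes values in the \emph{open} ball. The vector $F(a)$ is the barycenter in $\IR^{n+1}$ of the probability measure $\nu_a:=c^{-1}(\psi\,d\mu)\circ(\vf_a\circ\Phi)^{-1}$ on $S^n$, so $|F(a)|\le1$ with equality if and only if $\nu_a$ is a Dirac mass. Since $\mu$ is a continuous (non-atomic) measure and $\Phi$ is a conformal immersion, the preimage under $\vf_a\circ\Phi$ of a single point of $S^n$ is discrete in $\Omega$, hence $\mu$-null; thus $\nu_a$ is non-atomic and $|F(a)|<1$. Continuity of $F$ on $B^{n+1}$ follows from the continuity of $a\mapsto\vf_a$ together with dominated convergence.

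The key step is to show that $F$ extends to a continuous map $\overline F\colon\overline{B^{n+1}}\to\overline{B^{n+1}}$ with $\overline F|_{\partial B^{n+1}}=\id$. If $a_j\to q\in S^n$, then by the degeneration behavior recalled above $\vf_{a_j}\circ\Phi\to q$ pointwise on $\Omega\setminus\Phi^{-1}(\{-q\})$, and $\Phi^{-1}(\{-q\})$ is $\mu$-null because $\mu$ is non-atomic and $\Phi$ is an immersion; dominated convergence then gives $F(a_j)\to c^{-1}\cdot c\cdot q=q$. Hence $\overline F$ is well defined, continuous on the closed ball, and restricts to the identity on the boundary. This is the point where the hypothesis that $\mu$ be a \emph{continuous} measure is used essentially, to prevent a positive amount of mass from escaping to the antipode $-q$; I expect this boundary analysis to be the main obstacle, the rest being soft.

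Finally, a topological argument finishes the proof: if $0\notin\overline F(\overline{B^{n+1}})$, then $x\mapsto \overline F(x)/|\overline F(x)|$ would be a continuous retraction of $\overline{B^{n+1}}$ onto $\partial B^{n+1}$, contradicting Brouwer's no-retraction theorem (equivalently, $\deg(\overline F,B^{n+1},0)=\deg(\id,B^{n+1},0)=1\neq0$, so $0$ lies in the image). Thus $\overline F(a)=0$ for some $a\in\overline{B^{n+1}}$; since $\overline F=\id\neq0$ on $\partial B^{n+1}$, necessarily $a\in B^{n+1}$, and then $s=\vf_a$ satisfies $\int_\Omega \psi\cdot(z^i\circ s\circ\Phi)\,d\mu=0$ for all $i=1,\dots,n+1$, as desired.
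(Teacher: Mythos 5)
Your proof is correct and follows essentially the same route as the paper: the center-of-mass argument of Hersch, parametrizing the conformal dilations by the closed ball, extending the barycenter map continuously to the boundary as the identity, and invoking Brouwer's no-retraction theorem. The paper only sketches this (leaving the boundary continuity as "easily checked"), while you supply the non-atomicity and dominated-convergence details explicitly; the substance is identical.
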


This can be proved in complete analogy to the original Hersch lemma, see \cite{Hersch}.
For convenience of the reader we recall the main idea of the elegant argument.

Denote by $s_x$ the stereographic projection onto $T_xS^n.$
For $x \in S^n$ and $\lambda \in \IR_{>0}$ we have conformal diffeomorphisms $g_{x,\lambda}$  of $S^n$
given by $g_{x,\lambda}(y)=s_x^{-1}(\lambda s_x(y))$ for $y \neq -x$ and $g_{x,\lambda}(-x)=-x.$
Consider the map $C \colon D^{n+1} \to D^{n+1}$ given by 
\begin{equation*}
C(\lambda x) = \left(\int_\Omega \psi d\mu \right)^{-1} \int_\Omega \psi \cdot (z \circ g_{x,1-\lambda} \circ \Phi) d\mu,
\end{equation*} 
where $\lambda \in [0,1),$ and $x \in S^n.$
It is easily checked that this extends to a continous map $\overline{C} \colon \overline{D}^{n+1} \to \overline{D}^{n+1},$ which
restricts to the identity on $S^{n}.$
It follows, that there have to be $x, \lambda$ such that $C(\lambda x)=0.$

\subsection{Higher integrability of the conformal factors}

The next step is to improve the integrability of the conformal factors.
Once this is done a Harnack inequality due to Trudinger \cite{Trudinger} gives $L^\infty$-bounds and  \cref{main} follows from 
the standard elliptic estimates.

\begin{lem} \label{higher_int_lem}
For $u$ as in \cref{main}, 
there are $D,\eps>0$ depending on $(M,g)$ and $A,B$ such that 
\begin{equation*} 
\int_M u^{2^\star +\eps} dV_g \leq D. 
\end{equation*}
\end{lem}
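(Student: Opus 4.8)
The plan is to run a Moser iteration on equation \eqref{Yamabe}, using the volume non-concentration from \cref{non_conc_prop} to control the critical nonlinearity on small balls. Rewrite \eqref{Yamabe} as
\begin{equation*}
\Delta_g u = \frac{n-2}{4(n-1)}\left( \tilde R \, u^{2^\star-1} - R_g u\right),
\end{equation*}
and note that the dangerous term is $\tilde R\, u^{2^\star-1}$, since $R_g$ is bounded on the fixed manifold $(M,g)$. First I would fix a small ball $B=B(x,2r)$ and a cutoff $\eta$ supported in $B$ with $\eta\equiv 1$ on $B(x,r)$, and test the equation against $\eta^2 u^{1+2\beta}$ for $\beta\geq 0$. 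Integration by parts produces, on the left, a term controlling $\int \eta^2 |\nabla u^{(1+\beta)}|^2$ up to the usual cutoff error $\int |\nabla\eta|^2 u^{2+2\beta}$; on the right we get $\int \eta^2 |\tilde R|\, u^{2^\star+2\beta}$ plus the harmless $\int \eta^2 |R_g|\, u^{2+2\beta}$. For the critical term, apply Hölder with exponents $p$ and $p'=p/(p-1)$:
\begin{equation*}
\int_B \eta^2 |\tilde R|\, u^{2^\star+2\beta}\, dV_g \leq \left(\int_B |\tilde R|^p u^{2^\star}\, dV_g\right)^{1/p}\left(\int_B \left(\eta^2 u^{2^\star+2\beta}\right)^{p'} u^{-2^\star/(p-1)\cdot(p-1)\cdot\text{(adjust)}}\right)^{1/p'},
\end{equation*}
which I will arrange so that the first factor is $\leq A^{1/p}$ and the second factor is an integral of $(\eta u^{1+\beta})^{2^\star}$ against the probability measure $u^{2^\star}dV_g$ over $B$. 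The point of $p>n/2$ is precisely that $2^\star/(2p') < 1$, so one keeps a fractional power $(n-2)/(n p')$ of the concentrated volume $\int_B u^{2^\star}dV_g$, and \cref{non_conc_prop} lets us make this factor as small as we like by shrinking $r$.

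The main step is then a local Sobolev inequality on $(M,g)$ to convert $\int \eta^2|\nabla u^{1+\beta}|^2$ into $\left(\int (\eta u^{1+\beta})^{2^\star}\right)^{2/2^\star}$, combined with interpolating the right-hand side: because the Sobolev constant appears on the left and the small concentrated-volume factor multiplies the $L^{2^\star}$ norm on the right, choosing $r$ small enough absorbs the critical term into the left-hand side. This yields a reverse-Hölder / local boundedness estimate of the form
\begin{equation*}
\left(\int_{B(x,r)} u^{2^\star(1+\beta)}\, dV_g\right)^{1/(1+\beta)} \leq C(\beta)\left(\int_{B(x,2r)} u^{2(1+\beta)}\, dV_g + \int_{B(x,2r)} u^{2^\star}\, dV_g\right),
\end{equation*}
valid with a fixed $r=r(M,g,A,B)$ independent of the particular $u$, once $\delta$ in \cref{non_conc_prop} is chosen depending only on $n,p$ and the Sobolev constant of $(M,g)$. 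Taking $\beta$ small and positive — say $2(1+\beta)=2^\star$, i.e. $2+2\beta = 2^\star$, which is the first genuine iteration step — and covering $M$ by finitely many such balls, one bootstraps from the a priori bound $\int_M u^{2^\star}dV_g = 1$ (assumption (i)) to $\int_M u^{2^\star+\eps}dV_g\leq D$ for the explicit $\eps$ coming from the first step. No further iteration is needed for the statement as phrased, though of course the scheme could be continued.

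I expect the main obstacle to be bookkeeping the exponents so that (a) the Hölder split genuinely leaves a positive power of the small volume on the critical term — this requires $p>n/2$ and a careful choice of the two Hölder exponents applied to the product $\eta^2 u^{2^\star+2\beta}$ — and (b) the smallness radius $r$ and threshold $\delta$ can be chosen uniformly, i.e. depending only on $(M,g), A, B$ and not on $u$. Point (b) is exactly what \cref{non_conc_prop} delivers (the radius there depends only on $M,g,\delta,B$), so the argument is structurally clean; the care is entirely in making the algebra of the Caccioppoli-type estimate line up with the Sobolev inequality. A secondary technical point is that the cutoff error term $\int |\nabla\eta|^2 u^{2+2\beta}$ has subcritical exponent $2+2\beta < 2^\star$, so it is controlled by $\int_{B(x,2r)} u^{2^\star}\, dV_g$ via Hölder on a fixed-volume ball with a constant depending on $r$; this is where the finitely-many-balls covering and the global normalization (i) re-enter to close the estimate.
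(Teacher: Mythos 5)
Your proposal is correct and is essentially the same argument as in the paper (which itself follows Gursky): test \eqref{Yamabe} against $\eta^2 u^{1+2\eps}$, control the critical term $\int\tilde R\,\eta^2 u^{2^\star+2\eps}$ by a double application of H\"older leaving a factor $\bigl(\int_{B(x,r)}u^{2^\star}dV_g\bigr)^{(2p-n)/np}$, shrink $r$ via \cref{non_conc_prop} to absorb into the Sobolev term on the left, and cover $M$ by finitely many such balls. The only cosmetic difference is that the paper pins down the exponents explicitly (e.g.\ $q=n(p-1)/(p(n-2))>1$) where your sketch leaves an ``adjust'' placeholder, but the bookkeeping goes through exactly as you anticipate.
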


\begin{proof}
We follow the proof of Lemma 2.3 in \cite{Gursky}.

Of course, it suffices to prove that there are constants $\eps=\eps(n)>0,$ $r=r(M,g,A,B)>0,$ and $C=C(M,g,A,B)$ such that
$\int_{B(x,r)} u^{2^\star+\eps}dV_g \leq C$
for any $x \in M.$

For $r>0$ to be chosen later we take a smooth function $\eta  \colon M \to [0,1]$ with $\eta=1$ on $B(x,r/2)$, $\eta=0$ outside $B(x,r),$
and $| \nabla \eta | \leq C/r.$ 
If we choose  $\eps>0$ such that $2+2\eps \leq 2^\star,$ we get from the Sobolev inequality that
\begin{eqnarray}  \label{sobolev}
\begin{split}
\left(\int_M \left(\eta u^{1+\eps}\right)^{2^\star} dV_g \right)^{2/2^\star} 
& \leq C \left( \int_M |\nabla (\eta u^{1+\eps})|^2 dV_g + \int_M \eta^2 u^{2+2\eps} dV_g\right)\\
 & \leq C \int_M |\nabla (\eta u^{1+\eps})|^2 dV_g+ C \left( \int_M u^{2^\star}dV_g\right)^{(2+2\eps)/2^\star} \\ 
& \leq C \int_M |\nabla (\eta u^{1+\eps})|^2 dV_g + C, 
\end{split}
\end{eqnarray} 
using H{\"o}lder's inequality and the volume bound.

In order to estimate the first summand above we multiply equation \eqref{Yamabe} by $\eta^2 u^{1+2\eps}$ and integrate by parts in order to find 
\begin{equation*} 
\begin{split}
 (1+\eps) \int_M \eta^2  u^{2\eps} |\nabla u|^2 dV_g & 
 =\frac{n-2}{4(n-1)}  \left( \int_M \tilde{R} \eta^2 u^{2^\star +2\eps}dV_g
- \int_M R \eta^2 u^{2+2\eps}dV_g  \right)\\
& -2  \int_M  \eta u^{1+2 \eps} \nabla \eta \nabla u dV_g
- \eps \int_M \eta^2  u^{2\eps} |\nabla u|^2dV_g.
\end{split}
\end{equation*} 
Inserting this into
\begin{equation*} 
\begin{split}
\int_M |\nabla (\eta u^{1+\eps})|^2 dV_g
 = & \int_M u^{2+2\eps} |\nabla \eta|^2dV_g
+ 2 (1+\eps) \int_M \eta u^{1+2\eps}  \nabla \eta \nabla u dV_g\\
& +  (1+\eps)^2 \int_M \eta^2 u^{2 \eps} |\nabla u|^2 dV_g
\end{split}
\end{equation*}
gives
\begin{equation} \label{iter}
\begin{split}
& \int_M |\nabla (\eta u^{1+\eps})|^2 dV_g
=  \int_Mu^{2+2\eps} |\nabla \eta|^2 dV_g
- (1+\eps) \eps  \int_M \eta^2  u^{2\eps} |\nabla u|^2 dV_g \\
&  \ \ \ + (1+\eps)\frac{n-2}{4(n-1)}\left( 
\int_M\tilde{R} \eta^2 u^{2^\star +2\eps}dV_g
-\int_M R \eta^2 u^{2+2\eps}dV_g
\right) \\
& \leq
\int_Mu^{2+2\eps} |\nabla \eta|^2 dV_g
 + C\left( 
\int_M\tilde{R} \eta^2 u^{2^\star +2\eps}dV_g
-\int_M R \eta^2 u^{2+2\eps}dV_g
\right).
\end{split}
\end{equation}
Let us discuss all summands above separately.
By the choice of $\eta$ and the volume bound, we have, using H{\"o}lder's inequality,
\begin{equation} \label{summand_1}
\int_Mu^{2+2\eps} |\nabla \eta|^2 dV_g \leq \frac{C}{r^2}.
\end{equation}
By  H{\"o}lder's inequality and the volume bound again, we find
\begin{equation*}
\int_M\tilde{R} \eta^2 u^{2^\star +2\eps}dV_g 
 \leq \left(  \int_M |\tilde{R}|^p u^{2^\star} dV_g\right)^{1/p} \left( \int_M  (\eta u^\eps)^{2p/(p-1)} u^{2^\star} dV_g\right)^{(p-1)/p}.
\end{equation*}
Observe that $p>n/2$ implies
$q=n(p-1)/(p(n-2))>1.$
Thus by H{\"o}lder's inequality
\begin{equation*} \label{holder}
\int_M  (\eta u^\eps)^{2p/(p-1)} u^{2^\star} dV_g \leq \left( \int_M \left(\eta u^{1+\eps}\right)^{2^\star} dV_g\right)^\frac{1}{q} \left(\int_{B(x,r)} u^{2^\star}dV_g\right)^{(q-1)/q},
\end{equation*}
which implies
\begin{equation*}
\int_M\tilde{R} \eta^2 u^{2^\star +2\eps}dV_g 
 \leq A^{1/p} \left(\int_M \left(\eta u^{1+\eps}\right)^{2^\star} dV_g \right)^{2/2^\star} \left(\int_{B(x,r)} u^{2^\star}dV_g\right)^{(2p-n)/np}.
\end{equation*}
Since $p>n/2$ and using \cref{non_conc_prop}, we find $r=r(M,g,A,B),$ such that
\begin{equation*}
\left(\int_{B(x,r)} u^{2^\star}dV_g\right)^{(2p-n)/np} \leq \frac{1}{2A^{1/p}C}.
\end{equation*}
For such $r$ we conclude that
\begin{equation} \label{summand_2}
C \int_M\tilde{R} \eta^2 u^{2^\star +2\eps}dV_g 
\leq  \frac{1}{2} \left(\int_M \left(\eta u^{1+\eps}\right)^{2^\star} dV_g \right)^{2/2^\star}.
\end{equation}
The last summand is controlled by the volume bound
\begin{equation} \label{summand_3}
\int_M R \eta^2 u^{2+2\eps}dV_g \leq C \int_M u^{2+2\eps}dV_g \leq C.
\end{equation}
Combining \eqref{sobolev}-\eqref{summand_3} we conclude
\begin{equation*}
\left(\int_M \left(\eta u^{1+\eps}dV_g\right)^{2^\star} \right)^{2/2^\star}
\leq C+ \frac{C}{r^2} + \frac{1}{2} \left(\int_M \left(\eta u^{1+\eps}\right)^{2^\star} dV_g \right)^{2/2^\star},
\end{equation*}
and thus
\begin{equation*}
\int_{B(x,r/2)} u^{2^\star(1+\eps)}dV_g \leq C,
\end{equation*}
with $\eps=\eps(n),$ $r=r(M,g,A,B),$ and $C=C(M,g,A,B,r).$
\end{proof}

In order to prove \cref{main} we need the following Harnack inequality,
which can be found in \cite{Trudinger}.

\begin{lem} \label{harnack} 
Let $u \in W^{1,2}(M,g)$ be a non-negative solution of the elliptic equation
 $\Delta u = f u, $
 with $f \in L^q(M,g)$ for some $q>n/2.$
 Then there is $C=C(M,g,\|u\|_{L^2(M,g)},\|f\|_{L^q(M,g)})$ such that
\begin{equation*}
 C^{-1} \leq u \leq C.
\end{equation*}  
\end{lem}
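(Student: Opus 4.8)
The statement is precisely the Harnack inequality of Trudinger \cite{Trudinger} for the linear operator $\Delta - f$ with a zeroth order term $f \in L^q$, $q > n/2$, applied on a closed manifold, so the cleanest option is to invoke that reference; below I describe the argument one would give. The plan is to prove local estimates on small balls --- a supremum bound, a logarithmic estimate for $\log u$, and from these the full Harnack inequality --- and then to chain them using compactness and connectedness of $M$. Throughout I would fix a radius $r_0 > 0$ below the injectivity radius of $(M,g)$, small enough that on every ball $B(x,r)$ with $r \le r_0$ the metric $g$ is uniformly equivalent to the Euclidean metric and the Sobolev and Poincar\'e inequalities hold with constants depending only on $(M,g)$.

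First I would establish the upper bound by Moser iteration: testing $\Delta u = fu$ against $\eta^2 u^{\beta}$ for $\beta \ge 1$ and a cut-off $\eta$, and integrating by parts, yields a Caccioppoli-type inequality for $w := u^{(\beta+1)/2}$ whose only non-standard term is $\int_M |f| \eta^2 w^2 \, dV_g$. Here the hypothesis $q > n/2$ enters in an essential way: it forces the conjugate exponent $q'$ to satisfy $2q' < 2^\star$, so that by H\"older's inequality, interpolation between $L^2$ and $L^{2^\star}$, and the Sobolev inequality this term is bounded by $\tfrac12 \int_M |\nabla(\eta w)|^2 \, dV_g$ plus a term that can be absorbed at the cost of a constant depending on $\|f\|_{L^q}$ and $\beta$. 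One then obtains a reverse H\"older inequality on nested balls with gain exponent $\chi = n/(n-2)$, and iterating (letting $\beta+1$ run through $2\chi^j$) gives $\sup_{B(x,r_0/2)} u \le C \|u\|_{L^2(M,g)}$; covering $M$ by finitely many balls of radius $r_0/2$ produces the upper bound $u \le C_2$.

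Next I would address the lower bound. Testing $\Delta u = fu$ against $\eta^2 u^{-1}$ gives $\int_M \eta^2 |\nabla \log u|^2 \, dV_g \le C \int_M |\nabla \eta|^2 \, dV_g + C \int_M |f| \eta^2 \, dV_g \le C$ on balls of radius $\le r_0$, since $|f|$ is locally integrable with norm controlled by $\|f\|_{L^q}$. By the Poincar\'e inequality $\log u$ then has bounded mean oscillation on such balls with norm controlled by $(M,g)$ and $\|f\|_{L^q}$, and the John--Nirenberg inequality yields $p_0 = p_0(n) > 0$ with $\bigl(\int_{B(x,r_0)} u^{p_0}\, dV_g\bigr)\bigl(\int_{B(x,r_0)} u^{-p_0}\, dV_g\bigr) \le C$. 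Combining this with the subsolution supremum estimate of the first step (valid for the small exponent $p_0$ after the usual interpolation), and with the same estimate applied to $u^{-1}$ --- which is a subsolution of an equation of the same type with potential $-f$ --- gives $\sup_{B(x,r_0/2)} u \le C (\int_{B(x,r_0)} u^{p_0}\, dV_g)^{1/p_0}$ and $\inf_{B(x,r_0/2)} u \ge C^{-1} (\int_{B(x,r_0)} u^{-p_0}\, dV_g)^{-1/p_0}$, hence the genuine Harnack inequality $\sup_{B(x,r_0/2)} u \le C \inf_{B(x,r_0/2)} u$ on small balls. Finally, since $M$ is connected and compact I would join any two points by a chain of at most $N = N(M,g,r_0)$ overlapping half-balls and iterate the Harnack inequality along the chain to obtain $\sup_M u \le C^N \inf_M u$; since $\sup_M u \ge \vol(M,g)^{-1/2} \|u\|_{L^2(M,g)} > 0$, this gives $\inf_M u \ge C_1 > 0$ with $C_1$ depending only on $(M,g)$, $\|f\|_{L^q}$ and $\|u\|_{L^2(M,g)}$.

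The routine part is the Moser iteration bookkeeping --- tracking the $\beta$-dependent constants through infinitely many steps and checking convergence of the resulting product --- together with the interpolation needed to run the subsolution estimate with small exponents. The hard part will be the John--Nirenberg step, i.e.\ upgrading the $W^{1,2}$-bound on $\log u$ to exponential integrability of both $u^{p_0}$ and $u^{-p_0}$; this is the only place where an ingredient beyond integration by parts, H\"older and Sobolev enters, and it is what makes the lower bound quantitative. It is also worth emphasizing that the hypothesis $q > n/2$ is sharp for this scheme: at $q = n/2$ the zeroth order term becomes exactly critical and can no longer be absorbed, and the conclusion can then fail --- which is precisely the phenomenon the rest of the paper is concerned with.
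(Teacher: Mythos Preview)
The paper gives no proof of this lemma at all: it simply states the result and refers to \cite{Trudinger}. Your proposal does exactly the same in its first sentence, and the Moser-iteration sketch you add (local sup bound, BMO estimate for $\log u$ via John--Nirenberg, chaining over a finite cover) is the standard Trudinger argument and is correct; the only omission is the routine regularization $u \mapsto u+\eps$ needed to justify testing against $\eta^2 u^{-1}$ when $u$ is merely non-negative.
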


We now turn to the proof of \cref{main}. 

\begin{proof}[Proof of \cref{main}]
For $n/2 <q <p,$ we have
\begin{equation*}
\begin{split}
\int_M & |\tilde{R}|^q u^{(2^\star-2)q} dV_g
 = \int_M |\tilde{R}|^q u^{(2^\star-2)q-2^\star} u^{2^\star} dV_g \\
& \leq \left( \int_M |\tilde{R}|^p u^{2^\star} dV_g \right)^{q/p} \left(\int_M u^{((2^\star-2)q-2^\star)p/(p-q)+2^\star} dV_g \right)^{(p-q)/p}.
\end{split}
\end{equation*}
Observe that $((2^\star-2)q-2^\star)p/(p-q)+2^\star \to 2^\star,$ as $q \to n/2.$
Thus we can find $q>n/2,$ such that $((2^\star-2)q-2^\star)p/(p-q)+2^\star \leq 2^\star + \eps,$ for
$\eps$ given by \cref{higher_int_lem}.
For such $q$ we have
\begin{equation*}
\int_M  |\tilde{R} u^{(2^\star-2)}|^q dV_g \leq D^{(p-q)/p} A^{q/p}.
\end{equation*}
In particular, we can apply 
 \cref{harnack} to 
\begin{equation} \label{yamabe_1}
 4 \frac{n-1}{n-2} \Delta u = \left( \tilde{R} u^{2^\star-2} - R\right) u
\end{equation}
and conclude that
we have $C_1 \leq |u| \leq C_2$ with $C_1,C_2$ depending on $M,g,A,B,D.$
This in turn implies that the right hand side of \eqref{yamabe_1} is bounded in $L^p(M,g).$
Thus the standard elliptic estimates \cite[Theorem 6.4.8]{Morrey_1966} in $L^p$-spaces imply
that $\|u\|_{2,p} \leq C_3.$
\end{proof}

\section{Applications} \label{appl}

We discuss two applications of \cref{main}, one of which was in fact a motivation for studying
this problem.

\subsection{Conformal spectrum}
Thanks to the work of Li--Yau \cite{Li_Yau}, El Soufi--Ilias \cite{ElSoufi_Ilias} and Korevaar \cite{Korevaar} the scale invariant quantities 
$\lambda_k(M,g) \vol(M,g) ^{2/n}$ are bounded within a fixed conformal class.
Thus it is a natural question to ask whether there are metrics realizing
$\sup_{\phi}\lambda_1(M,\phi g) \vol(M, \phi g) ^{2/n},$
where the supremum is taken over all smooth positive functions $\phi.$
In dimension two the conformal covariance of the Laplace operator simplifies the situation tremendously, 
but it remains a very difficult problem which was resolved only recently (see \cite{Kokarev, Petrides_2}.)
Also, it follows from the appendix in \cite{CY2} that there are H{\"o}lder continuous maximizers in dimension two, if one additionally imposes $L^p$ curvature bounds for $p > 1.$
\cref{main} generalizes this partially to higher dimensions.

For $p>n/2,$ $A>0$ and $B >n((n+1)\omega_{n+1})^{2/n},$ denote by $[g]_{A,B}$ the subset of the conformal class $[g]$ consisting of all metrics of the form $u^{4/{n-2}}g,$ such that $u \in W^{2,p}$ with
$\vol(M, u^{4/{n-2}}g)=1,$ $\int_M |R_{ u^{4/{(n-2)}}g}|^p u^{2^\star} dV_g \leq A $ and
$\lambda_1(M, u^{4/{n-2}}g) \geq B.$
Thanks to \cite{Petrides}, there are $A,B$ as above such that $[g]_{A,B}$ is non-empty. 

We have
\begin{thm} \label{conf_spec}
Let $p>n/2,$ $A>0$ and $B >n((n+1)\omega_{n+1})^{2/n},$ such that $[g]_{A,B} \neq \emptyset.$
Then there is a H{\"o}lder continuous positive function $u,$ such that $u^{4 / (n-2)}g \in [g]_{A,B}$ and
 $\lambda_1 (M, u^{4/(n-2)}g) = \sup_{h \in [g]_{A,B}} \lambda_1(h).$
\end{thm}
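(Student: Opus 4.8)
The plan is to set this up as a direct maximization problem and use \cref{main} to extract a convergent subsequence from a maximizing sequence. Let $S = \sup_{h \in [g]_{A,B}} \lambda_1(h)$, which is finite by the Li--Yau--El Soufi--Ilias--Korevaar bound and the volume normalization, and positive since $B > 0$ gives a lower bound for every element of $[g]_{A,B}$. Pick a maximizing sequence $h_k = u_k^{4/(n-2)}g \in [g]_{A,B}$ with $\lambda_1(h_k) \to S$. Each $u_k$ satisfies assumptions (i), (ii), (iii) of \cref{main} with the same constants $A, B$, so by the theorem there are uniform bounds $C_1 \leq u_k \leq C_2$ and $\|u_k\|_{W^{2,p}(M,g)} \leq C_3$ independent of $k$.

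Next I would pass to the limit. By Rellich--Kondrachov, the bound in $W^{2,p}(M,g)$ gives, after passing to a subsequence, $u_k \to u$ strongly in $W^{1,2}(M,g)$ and in $C^0(M)$ (since $p > n/2$ makes $W^{2,p} \hookrightarrow C^{0,\alpha}$ for some $\alpha > 0$, and this embedding is compact into $C^0$), and weakly in $W^{2,p}$. The uniform two-sided bounds pass to $u$, so $C_1 \leq u \leq C_2$ and in particular $u$ is positive; moreover $u$ is H\"older continuous and lies in $W^{2,p}$. I then need to check that $\tilde g = u^{4/(n-2)}g \in [g]_{A,B}$. The volume constraint $\int_M u^{2^\star}\,dV_g = 1$ passes to the limit by uniform convergence. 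For the curvature bound, the scalar curvature $R_{h_k}$ is determined by \eqref{Yamabe}; with the uniform bounds in hand and $u_k \rightharpoonup u$ in $W^{2,p}$, one has $R_{h_k}u_k^{2^\star} = 4\frac{n-1}{n-2}\Delta u_k + R u_k \rightharpoonup 4\frac{n-1}{n-2}\Delta u + R u = R_{\tilde g}u^{2^\star}$ weakly in $L^p(M,g)$, and lower semicontinuity of the $L^p$ norm under weak convergence gives $\int_M |R_{\tilde g}|^p u^{2^\star}\,dV_g \leq \liminf_k \int_M |R_{h_k}|^p u_k^{2^\star}\,dV_g \leq A$. For the eigenvalue constraint, I use continuity of $\lambda_1$: since $h_k \to \tilde g$ in a strong enough topology (the conformal factors converge in $C^0$ with uniform positive lower bounds, so the metrics are uniformly quasi-isometric and converge uniformly), $\lambda_1(h_k) \to \lambda_1(\tilde g)$; hence $\lambda_1(\tilde g) = S \geq B$. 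This shows $\tilde g \in [g]_{A,B}$ and realizes the supremum.

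For the continuity of $\lambda_1$ under the convergence $h_k \to \tilde g$, I would argue by the min-max characterization directly rather than invoke an off-the-shelf theorem: given $\epsilon > 0$, the quadratic forms $Q_k(f) = \int_M |df|^2_{h_k}\,dV_{h_k}$ and the $L^2$ norms $\int_M f^2\,dV_{h_k}$ are each sandwiched between $(1+\epsilon)^{\pm N}$ times their limiting counterparts for $k$ large, uniformly in $f$, because $u_k \to u$ uniformly with $u$ bounded away from $0$ and $\infty$; feeding this into the Rayleigh quotient and the min-max principle gives $|\lambda_1(h_k) - \lambda_1(\tilde g)| \to 0$. This is routine. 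The main obstacle, and the step deserving the most care, is the weak lower semicontinuity argument for the curvature integral: one must be careful that the weight $u_k^{2^\star}$ is also moving, but since $u_k \to u$ uniformly with uniform positive bounds, $|R_{h_k}|^p u_k^{2^\star}$ and $|R_{h_k}|^p u^{2^\star}$ differ by a factor tending to $1$ uniformly, so it suffices to treat $\int_M |R_{h_k}|^p u^{2^\star}\,dV_g$, and here $R_{h_k} \rightharpoonup R_{\tilde g}$ weakly in $L^p(M, u^{2^\star}dV_g)$ by the equation above, whence convexity of $t \mapsto |t|^p$ and Mazur's lemma yield the desired inequality. With all three constraints verified in the limit, $\tilde g$ is the claimed H\"older continuous maximizer.
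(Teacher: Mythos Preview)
Your proof is correct and follows essentially the same approach as the paper: take a maximizing sequence, apply \cref{main} for uniform $W^{2,p}$ and two-sided $L^\infty$ bounds, pass to a weak $W^{2,p}$ / strong $C^{0,\beta}$ limit, use continuity of $\lambda_1$ under $C^0$ convergence of the conformal factors, and verify the curvature constraint via weak lower semicontinuity of the $L^p$ norm after expressing $R_{h_k}$ through the Yamabe equation. One minor slip: the Yamabe equation \eqref{Yamabe} gives $R_{h_k}u_k^{2^\star-1}$, not $R_{h_k}u_k^{2^\star}$, equal to $4\tfrac{n-1}{n-2}\Delta u_k + R u_k$, but this does not affect your argument since you only need $R_{h_k}\rightharpoonup R_{\tilde g}$ in $L^p$, which follows after dividing by the uniformly convergent positive factor $u_k^{2^\star-1}$.
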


\begin{proof}
Let $(u_k)$ be a sequence of functions such that
$u_k^{4/(n-2)}g \in [g]_{A,B}$ and
$\lim_{k \to \infty} \lambda_1(M, u_k^{4/(n-2)}g) = \sup_{h \in [g]_{A,B}} \lambda_1(h). $
Due to \cref{main}, $(u_k)$ is bounded in $W^{2,p}.$
Thus we have a subsequence (not relabeled) $u_k \rightharpoonup u_*$ in $W^{2,p}.$
By the standard embedding results for Sobolev spaces we have that $W^{2,p} \hookrightarrow C^{0, \alpha}$ for some
$\alpha > 0,$ since $p > n/2.$
Thanks to the Theorem of Arzela--Ascoli, the embedding $C^{0, \alpha} \hookrightarrow C^{0, \beta}$ is compact for $\beta < \alpha.$
Thus we can extract a further subsequence (again not relabeled) such that
$u_k \to u_*$ in $C^{0, \beta}.$
Since the functional $\lambda_1$ is continuous with respect to convergence in $C^0,$
we see that $\lambda_1(M,u_*^{4/(n-2)}g) = \sup_{h \in [g]_{A,B}} \lambda_1(h).$

It remains to prove that $u_*^{4/(n-2)}g \in [g]_{A,B}.$
Since we have uniform upper and lower bounds for $u_k$ and thus also for $u_*,$ we can write thanks to
\eqref{Yamabe} that
\begin{equation}
 R_k= \frac{4(n-1)/(n-2)\Delta u_k + R u_k}{u_k^{2^\star-1}}
\end{equation}
and similarly for $R_*.$
Since $\Delta u_k \rightharpoonup \Delta u_*$ in $L^p(M,g)$ and $u_k \to u_*$ in $C^0,$ this implies
that $R_k \rightharpoonup R_*$ in $L^p(M,g).$
Moreover, by the uniform upper and lower bounds on $u_*,$ this implies that 
$R_k \rightharpoonup R_*$ in $L^p(M,u_*^{4/(n-2)}g).$
This implies
\begin{equation}
\int_M |R_*|^p u^{2^\star} dV_g \leq
\liminf_{k \to \infty} \int_M |R_k|^p u^{2^\star} dV_g
= \liminf_{k \to \infty} \int_M |R_k|^p u_k^{2^\star} dV_g
\end{equation}
and thus $u_*^{4/(n-2)}g \in [g]_{A,B}.$
\end{proof}

\subsection{Isospectral metrics}
For a fixed metric $g$ denote by  $\mathcal{I}(g)$ the set of all metrics on $M$ isospectral
to $g.$
With the same arguments as in in the proof of \cref{conf_spec} we find 
\begin{thm} \label{isospec}
Let $(M,g)$ and $A>0$ be such that $\vol(M,g)=1,$ $\lambda_1(M,g)> n((n+1)\omega_{n+1})^{2/n}$ and $g \in [g]_{A,\lambda_1(M,g)}.$
Then the set $\mathcal{I}(g) \cap [g]_{A,\lambda_1(M,g)}$ is precompact in $C^{0,\alpha}$ for some $\alpha>0.$
\end{thm}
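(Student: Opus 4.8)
\textbf{Proof plan for \cref{isospec}.}
The plan is to run the argument of \cref{conf_spec} essentially verbatim, with the curvature bound playing the role it did there and isospectrality providing the needed control on the first eigenvalue. First I would fix an enumeration $h_k = u_k^{4/(n-2)}g$ of the metrics in $\mathcal{I}(g) \cap [g]_{A,\lambda_1(M,g)}$ (if the set is finite there is nothing to prove, so assume it is infinite and pick a sequence). By definition each $u_k \in W^{2,p}$ satisfies $\vol(M,h_k)=1$ and $\int_M |R_{h_k}|^p u_k^{2^\star}\,dV_g \le A$. The key point is that since $h_k$ is isospectral to $g$, we have $\lambda_1(M,h_k) = \lambda_1(M,g) > n((n+1)\omega_{n+1})^{2/n}$, so assumption $(iii)$ of \cref{main} holds with $B = \lambda_1(M,g)$. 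Hence \cref{main} applies uniformly to the whole family, giving constants $C_1 \le u_k \le C_2$ and $\|u_k\|_{W^{2,p}(M,g)} \le C_3$ independent of $k$.

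Next I would extract a convergent subsequence. Since $(u_k)$ is bounded in $W^{2,p}$ and $p > n/2$, the Rellich--Kondrachov theorem together with the Sobolev embedding $W^{2,p}\hookrightarrow C^{0,\alpha}$ (for some $\alpha>0$ depending on $n$ and $p$) and the compactness of $C^{0,\alpha}\hookrightarrow C^{0,\beta}$ for $\beta<\alpha$ yields a subsequence, not relabeled, with $u_k \to u_*$ in $C^{0,\beta}$, and the uniform bounds pass to the limit so that $C_1 \le u_* \le C_2$. This already shows precompactness of the family in $C^{0,\beta}$; taking $\beta$ to be, say, half of the Sobolev H\"older exponent gives the claimed $\alpha$ in the statement.

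Finally, to see that the limit stays in the relevant set one repeats the lower-semicontinuity argument from the end of the proof of \cref{conf_spec}: writing $R_{h_k} = \bigl(4\tfrac{n-1}{n-2}\Delta u_k + R u_k\bigr) u_k^{-(2^\star-1)}$, the weak convergence $\Delta u_k \rightharpoonup \Delta u_*$ in $L^p$ (a consequence of the $W^{2,p}$ bound and uniqueness of weak limits) combined with $u_k \to u_*$ in $C^0$ and the uniform two-sided bounds on $u_k$ gives $R_{h_k}\rightharpoonup R_{h_*}$ in $L^p(M,g)$, hence also in $L^p(M,h_*)$, and therefore
\begin{equation*}
\int_M |R_{h_*}|^p u_*^{2^\star}\,dV_g \le \liminf_{k\to\infty} \int_M |R_{h_k}|^p u_k^{2^\star}\,dV_g \le A,
\end{equation*}
while $\lambda_1$ is continuous under $C^0$-convergence of the conformal factor, so $\lambda_1(M,h_*) = \lambda_1(M,g)$ and $\vol(M,h_*)=1$; thus $h_* \in \mathcal{I}(g)\cap[g]_{A,\lambda_1(M,g)}$ is not actually needed for precompactness but confirms the limit lies in the closure of the family inside $[g]_{A,\lambda_1(M,g)}$. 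The only mild subtlety — the main obstacle, such as it is — is verifying that the $W^{2,p}$ bound of \cref{main} is genuinely uniform over the family: this requires noting that the constants $C_i$ in \cref{main} depend on $(M,g)$ and on $A, B$ only, and here $A$ is fixed and $B = \lambda_1(M,g)$ is fixed, so there is no loss of uniformity; everything else is a routine transcription of the compactness argument already carried out for \cref{conf_spec}.
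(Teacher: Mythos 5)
Your proposal is correct and takes essentially the same approach as the paper, which itself simply refers to the proof of \cref{conf_spec}: use isospectrality to see that every $h_k \in \mathcal{I}(g)\cap[g]_{A,\lambda_1(M,g)}$ satisfies the hypotheses of \cref{main} with the fixed data $A$ and $B=\lambda_1(M,g)$, obtain uniform $W^{2,p}$ bounds on the conformal factors, and conclude via $W^{2,p}\hookrightarrow C^{0,\alpha}\hookrightarrow\hookrightarrow C^{0,\beta}$. Your observation that the closure step (showing the limit stays in the set) is not needed for precompactness is also accurate.
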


\end{document}